\begin{document}
\thispagestyle{empty}

\title[Affineness on Noetherian graded rings, algebras and Hopf algebras]
{Affineness on Noetherian graded rings, algebras and Hopf algebras}

\author{
{Huan Jia $^{1,2}$}
and 
{Yinhuo Zhang $^{2,*}$}}

\address{$^{1}$ Department of Mathematics, Suqian Unviersity, Suqian City 223800, Jiangsu Province, China}

\address{$^{2}$ Department of Mathematics and Statistics,
University of Hasselt, Universitaire Campus, 3590 Diepenbeek, Belgium}

\address{Huan Jia $^{1,2}$}
\email{huan.jia@squ.edu.cn}

\address{Yinhuo Zhang $^{2,*}$}
\email{yinhuo.zhang@uhasselt.be}
 
\subjclass[2010]{16T05; 16T20; 16S40; 16W50}
\date{\today}
\maketitle

\newtheorem{theorem}{Theorem}[section]
\newtheorem{proposition}[theorem]{Proposition}
\newtheorem{lemma}[theorem]{Lemma}
\newtheorem{corollary}[theorem]{Corollary}
\theoremstyle{definition}
\newtheorem{definition}[theorem]{Definition}
\newtheorem{example}[theorem]{Example}
\newtheorem{remark}[theorem]{Remark}

\newcommand{\A}{\mathcal{A}}
\newcommand{\B}{\mathcal{B}}
\newcommand{\C}{\mathcal{C}}
\newcommand{\D}{\mathcal{D}}
\newcommand{\M}{\mathcal{M}}
\newcommand{\K}{\mathds{k}}
\newcommand{\T}{\mathcal{T}}
\newcommand{\Z}{\mathbb{Z}}

\newcommand{\Tt}{\overline{\mathcal{T}}}
\newcommand{\BN}{\mathcal{B}}
\newcommand{\N}{\mathds{N}}
\newcommand{\Pp}{\mathcal{P}}
\def\cL{\mathcal{L}}
\newcommand{\cI}{\mathcal{I}}
\newcommand{\cF}{\mathcal{F}}
\newcommand{\bN}{\mathbb{N}}
\newcommand{\X}{\langle X \rangle}
\newcommand{\Y}{\langle Y \rangle}
\newcommand{\Xo}{\langle X_{0} \rangle}
\newcommand{\Xge}{\langle X_{\ge 1} \rangle}
\newcommand{\Yo}{\langle Y_{0} \rangle}
\newcommand{\So}{\langle S_{0} \rangle}
\newcommand{\XI}{\langle X|I \rangle}
\newcommand{\al}{\alpha}
\newcommand{\cM}{\mathcal{M}}
\newcommand{\cN}{\mathcal{N}}
\newcommand{\cs}{\text{c}}

\newcommand\ad{\operatorname{ad}}
\newcommand\Ob{\operatorname{Ob}}
 \newcommand\Aut{\operatorname{Aut}}
 \newcommand\coker{\operatorname{coker}}
\newcommand\car{\operatorname{char}}
\newcommand\Der{\operatorname{Der}}
\newcommand\diag{\operatorname{diag}}
\newcommand\End{\operatorname{End}}
\newcommand\mult{\operatorname{mult}}
\newcommand\id{\operatorname{id}}
\newcommand\Char{\operatorname{char}}
\newcommand\gr{\operatorname{gr}}
\newcommand\GKdim{\operatorname{GKdim}}
\newcommand\GK{\operatorname{GK}}
\newcommand{\Hom}{\operatorname{Hom}}
\newcommand\ord{\operatorname{ord}}
\newcommand\rk{\operatorname{rk}}
\newcommand\Soc{\operatorname{soc}}
\newcommand\supp{\operatorname{supp}}
\newcommand\Rep{\operatorname{Rep}}
\newcommand\corad{\operatorname{corad}}
\newcommand\PBW{\operatorname{PBW}}
\newcommand{\h}{\operatorname{h}}
\newcommand{\Q}{\operatorname{Q}}
\newcommand\lex{\operatorname{lex}}
\newcommand\glex{\operatorname{glex}}
\newcommand\flex{\operatorname{flex}}
\newcommand\LS{\operatorname{LS}}
\newcommand\LW{\operatorname{LW}}
\newcommand\G{\operatorname{G}}
\newcommand\Sh{\operatorname{Sh}}
\newcommand\rank{\operatorname{rank}}
\newcommand{\st}{\operatorname{st}}
\newcommand{\BFdB}{\mathcal{B}_{\operatorname{FdB}}}
\newcommand{\BFdBnc}{\mathcal{B}_{\operatorname{FdB}}^{\operatorname{nc}}}
\newcommand{\HFdB}{\mathcal{H}_{\operatorname{FdB}}}
\newcommand{\HFdBnc}{\mathcal{H}_{\operatorname{FdB}}^{\operatorname{nc}}}
\newcommand\fd{\operatorname{fd}}

\newcommand{\ULa}{\underline{\mathbf{a}}}
\theoremstyle{plain}
\newcounter{maint}
\renewcommand{\themaint}{\Alph{maint}}
\newtheorem{mainthm}[maint]{Theorem}

\theoremstyle{plain}
\newtheorem*{proofthma}{Proof of Theorem A}
\newtheorem*{proofthmb}{Proof of Theorem B}

\parskip 3mm

\begin{abstract}
\bigskip 
In this note, we show that every Noetherian graded ring with an affine degree zero part is affine. As a result, a Noetherian graded Hopf algebra whose  degree zero component is a commutative or a cocommutative Hopf subalgebra is affine. 
Moreover, we show that the braided Hopf algebra of a Noetherian graded Hopf algebra is affine.
\vspace{2mm}

\noindent {Keywords: \textit{graded ring, graded Hopf algebra, braided Hopf algebra, noetherian, affine}} 
\end{abstract}

\section*{Introduction}
In 1997 Brown \cite{B1998} proposed to study ``Noetherian Hopf algebras" as an independent topic of infinite-dimensional Hopf algebras and raised several  questions and conjectures about Noetherian Hopf algebras,  see \cite{B2007,BG2014,G2013,BZ2020,A2023} for more details.
In 2003 Wu and Zhang proposed the following classical question, which is important for the classification and the structure study of Noetherian Hopf algebras:

\noindent
\textbf{Question  \cite{WZ2003}} 
\textit{Is every Noetherian Hopf k-algebra an affine $\K$-algebra? }

To  the authors' knowledge, there are  no counterexamples so far and the known results are relatively limited. When a Hopf algebra is commutative,  the Noetherian property is equivalent to the affine property. This was proved by Molnar in 1975: 
\begin{lemma}{\cite{M1975}}\label{lem:Molnar}
\textup{(a)} A commutative Hopf algebra is Noetherian if and only if it is an affine $\K$-algebra.\\
\textup{(b)}  A cocommutative Noetherian Hopf algebra is affine.
\end{lemma}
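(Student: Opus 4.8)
The plan is to prove the two parts separately. In part~(a) the implication ``affine $\Rightarrow$ Noetherian'' is just the Hilbert basis theorem, so the substance is the converse; part~(b) will be handled by an argument parallel to that converse together with the structure theory of cocommutative Hopf algebras.

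For the converse in part~(a), I would work with the affine group scheme $G=\operatorname{Spec}H$ over $\K$. First I would record that a commutative Hopf algebra, for which $S^{2}=\operatorname{id}$, is the directed union of its finitely generated Hopf subalgebras: given $h\in H$, the subcoalgebra generated by $h$ is finite-dimensional by the fundamental theorem of coalgebras, so adjoining its image under $S$ and then passing to the generated subalgebra produces a finitely generated Hopf subalgebra containing $h$. Writing $H=\varinjlim_{i}H_{i}$ accordingly, we get $G=\varprojlim_{i}G_{i}$ with each $G_{i}=\operatorname{Spec}H_{i}$ an affine algebraic group and each $p_{i}\colon G\to G_{i}$ the induced homomorphism, whose scheme-theoretic kernel is $\operatorname{Spec}(H/H_{i}^{+}H)$. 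Now the augmentation ideal $H^{+}$ is the directed union of the ideals $H_{i}^{+}H$, and it is finitely generated because $H$ is Noetherian; hence $H^{+}=H_{i_{0}}^{+}H$ for some index $i_{0}$, so that $\ker p_{i_{0}}=\operatorname{Spec}(H/H^{+})=\operatorname{Spec}\K$ is trivial. Over a field, $H$ is faithfully flat over the Hopf subalgebra $H_{i_{0}}$, and the ensuing correspondence between quotient Hopf algebras and Hopf subalgebras identifies $H_{i_{0}}$ with the coinvariants of $H$ along the quotient $H/H_{i_{0}}^{+}H$; since that quotient is $H/H^{+}=\K$ with the counit as quotient map, these coinvariants are all of $H$, and therefore $H=H_{i_{0}}$ is affine. (Equivalently: $p_{i_{0}}$ is a homomorphism of affine group schemes with trivial kernel, hence a closed immersion, forcing the comorphism $H_{i_{0}}\hookrightarrow H$ to be onto.)

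For part~(b), let $H$ be cocommutative and Noetherian; since $S^{2}=\operatorname{id}$, the coradical filtration is at our disposal and $H$ is governed by the structure theory of cocommutative Hopf algebras, built from the group $\Gamma=G(H)$ of grouplikes and the connected (pointed irreducible) component $H_{1}$ of the identity. Noetherianity is inherited by $\K\Gamma$ and by $H_{1}$ --- either via faithful flatness of $H$ over these Hopf subalgebras, or directly from the structure theorem --- and then $\K\Gamma$ Noetherian forces the ascending chain condition on subgroups of $\Gamma$, so $\Gamma$ is finitely generated and $\K\Gamma$ is affine, while $H_{1}$, being connected cocommutative and Noetherian, is affine: in characteristic zero because $H_{1}=U(\mathfrak g)$ with $\mathfrak g=P(H_{1})$ necessarily finite-dimensional, and in characteristic $p$ by the analogous finiteness for the restricted-enveloping and divided-power data describing connected Hopf algebras. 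Finally $H$ is reconstructed from $\K\Gamma$ and $H_{1}$ as a crossed product, and a crossed product of affine algebras is affine, giving the claim.

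The hard part, and the place where I expect to import a nontrivial external fact, is the input used in part~(a): that over a field a commutative Hopf algebra is faithfully flat over every Hopf subalgebra and the resulting Galois-type correspondence holds (equivalently, that a homomorphism of affine group schemes over a field with trivial scheme-theoretic kernel is a closed immersion). For part~(b) the delicate point is the positive-characteristic structure of the connected component $H_{1}$, where ``connected'' must be understood through the Frobenius kernels and one has to check that the Noetherian hypothesis keeps every successive layer finite-dimensional.
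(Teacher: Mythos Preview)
The paper does not give its own proof of this lemma: it is simply quoted from Molnar's 1975 note \cite{M1975} and used as a black box throughout. So there is nothing in the paper to compare your argument against.

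That said, your treatment of part~(a) is essentially Molnar's original proof: write $H$ as the directed union of finitely generated Hopf subalgebras $H_i$, use Noetherianity to get $H^{+}=H_{i_0}^{+}H$ for some $i_0$, and then invoke faithful flatness of a commutative Hopf algebra over any Hopf subalgebra to conclude $H=H_{i_0}$. This is correct, and your identification of the faithful-flatness/closed-immersion input as the one nontrivial external ingredient is accurate.

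Your sketch for part~(b) is more fragile. The Cartier--Gabriel--Kostant decomposition $H\cong H_{1}\mathbin{\#}\K\Gamma$ that you rely on requires $\K$ to be algebraically closed; over a general field a cocommutative Hopf algebra need not be pointed, so the ``connected component plus group algebra'' picture is not available as stated. Your positive-characteristic analysis of $H_{1}$ is also left as a promissory note rather than an argument. A cleaner route, closer in spirit to part~(a), is to use that a cocommutative Hopf algebra is free (in particular faithfully flat) over each of its Hopf subalgebras, and then rerun the exact same directed-union argument you gave for~(a); this avoids the structure theory entirely and works uniformly in all characteristics and over any field. If you want to keep your structural approach, you should at minimum either assume $\K$ algebraically closed or explain how to descend the conclusion.
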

In the case of a PI Hopf algebra which is  close to a commutative Hopf algebra, we don't know if the two properties  are equivalent,  see \cite{BG2014,G2013}. 

In this note, we investigate the affine property of a Noetherian Hopf algebra in the graded case.  We first show in general that a Noetherian graded ring is affine if its degree zero part is affine, see Theorem \ref{thm:noetherian-graded-ring-affine}. As  a result, a Noetherian Hopf algebra (graded as an algebra) is affine if its degree zero part is a commutative or a cocommutative Hopf subalgebra, see Theorem \ref{thm:noetherian-graded-algebra-affine} and Corollary \ref{cor:noetherian-graded-Hopfalgebra-affine}. 
Since every Hopf algebra has a Hopf algebra filtration  and the associated graded Hopf algebra is a Radford biproduct \cite{R2012} of the degree zero part and a braided Hopf algebra, 
we show that the braided Hopf algebra of a Noetherian graded  Hopf algebra is affine, see Theorem \ref{thm:braided-part-affine}.

\section{Noetherian graded rings and graded algebras}

Let $\K$ be a field and $\mathbb{N}$ the set of natural numbers. The \textit{coradical} $\corad(C)$ of a coalgebra $C$ over $\K$ is the sum of all simple subcoalgebras of $C$. A ring is called \textit{affine} if it is finitely generated as a ring. 

Let $R=\oplus_{n\ge 0} R_{n}$ be an $\mathbb{N}$-graded ring and denote by $R_{+}:= \oplus_{n\ge 1} R_{n}$. First we show the following result for generators of a graded ring.

\begin{lemma}\label{lem:generators-noetherian-graded-ring}
Let $R=\oplus_{n\ge 0}R_{n}$ be a graded ring and $S_{0}$ a generating set of $R_{0}$. The following hold:
\begin{itemize}
  \item [(a)] Let $S_{+}$ be a subset of homogeneous elements of $R_{+}$. If $R_{+}=S_{+}R$ (resp. $R_{+}=RS_{+}$), then $R=\langle S_{0} \cup S_{+} \rangle$, that is, if $S_{+}$ generates $R_{+}$ as a right ideal (resp. left ideal) of $A$, then $S_{0} \cup S_{+}$ generates $R$ as a ring.
  \item [(b)] If $R$ is right or left noetherian, then there is a finite subset $S_{+}\subseteq R_{+}$ such that $R=\langle S_{0} \cup S_{+}\rangle$. 
\end{itemize}
\end{lemma}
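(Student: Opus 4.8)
The plan is to prove (a) first by a direct degree-by-degree argument, and then deduce (b) by invoking the Noetherian hypothesis to replace the (possibly infinite) generating set $S_{+}$ by a finite one.

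For part (a), suppose $R_{+} = S_{+}R$ with $S_{+}$ consisting of homogeneous elements of positive degree. Let $T = \langle S_{0}\cup S_{+}\rangle$ be the subring generated by $S_{0}\cup S_{+}$; I want to show $T = R$. Since $S_{0}$ generates $R_{0}$ as a ring, we have $R_{0}\subseteq T$. I will show $R_{n}\subseteq T$ for all $n\ge 1$ by induction on $n$. Take a homogeneous element $x\in R_{n}$ with $n\ge 1$; then $x\in R_{+}=S_{+}R$, so we may write $x = \sum_{i} s_{i} r_{i}$ with $s_{i}\in S_{+}$ homogeneous of degree $d_{i}\ge 1$ and $r_{i}\in R$. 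Comparing degrees (and projecting onto the degree-$n$ component, using that each $s_{i}$ is homogeneous), we may assume each $r_{i}$ is homogeneous of degree $n - d_{i} < n$. By the induction hypothesis each $r_{i}\in T$, and $s_{i}\in T$, so $x\in T$. Hence every homogeneous component of $R$ lies in $T$, so $R = T$. The left-ideal case is symmetric, writing $x = \sum_{i} r_{i} s_{i}$ instead. This part involves no real obstacle; the only point to be careful about is the homogeneity bookkeeping, i.e. that one really can reduce to homogeneous $r_{i}$ of strictly smaller degree.

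For part (b), assume $R$ is right Noetherian (the left case is symmetric). Then the right ideal $R_{+}$ of $R$ is finitely generated, say $R_{+} = x_{1}R + \cdots + x_{m}R$. Each $x_{j}$ decomposes as a finite sum of its homogeneous components $x_{j} = \sum_{k} x_{j,k}$ with $x_{j,k}\in R_{k}$; since $x_{j}\in R_{+}$, only components with $k\ge 1$ occur. Let $S_{+}$ be the (finite) set of all these homogeneous components $x_{j,k}$. Then $S_{+}$ is a finite set of homogeneous elements of $R_{+}$ and $R_{+} = \sum_{j} x_{j}R \subseteq S_{+}R \subseteq R_{+}$, so $R_{+} = S_{+}R$. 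Applying part (a) gives $R = \langle S_{0}\cup S_{+}\rangle$, as desired. The only thing worth noting here is that $S_{0}$ is not required to be finite in the statement of (b) — the conclusion only promises a finite $S_{+}$ — so no further work on $R_{0}$ is needed; finiteness of $R_{0}$'s generating set is exactly what Theorem \ref{thm:noetherian-graded-ring-affine} will later have to supply separately.

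Overall there is no serious obstacle in this lemma: it is the standard "graded Nakayama / generators in low degree" argument, and the main care needed is simply to track homogeneous components correctly when passing between the ideal-generation statement and the ring-generation statement.
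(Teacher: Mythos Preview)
Your proof is correct and follows essentially the same approach as the paper: induction on degree for part (a), and for part (b) passing from a finite generating set of the right ideal $R_{+}$ to a finite set of homogeneous generators and then applying (a). The only difference is cosmetic --- you spell out explicitly the step of replacing generators by their homogeneous components, which the paper leaves implicit.
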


\begin{proof}
Clearly, $R_{+}$ is a right ideal of $R$. To show that $S_{0} \cup S_{+}$ generates $R$ as a ring, it is enough to show that every homogeneous element $r\in R$ belongs to the subring $\langle S_{0} \cup S_{+}\rangle$ of $R$ generated by $S_{0} \cup S_{+}$. It holds for $r\in R_{0}$. Now let $r\in R_{n}$ with $n\ge 1$. Recall that $R_{+}=S_{+}R$. Thus,  there are homogeneous elements $s_{1},\ldots,s_{m}\in S_{+}$ and $r_{1},\ldots,r_{m}\in R$ such that 
\begin{align*}
r = \sum_{k=1}^{m} s_{k}r_{k}.
\end{align*} 
Note that $s_{k}$ ($1\le k\le m$) are homogeneous elements of $R_{+}$. Thus $\deg(r_{k})=\deg(r)-\deg(s_{k})< \deg(r)=n$. If $n=1$, then $\deg(r_{k})=0$ and $r_{k}\in R_{0}$. Therefore,  $r\in S_{+}R_{0} \subseteq \langle S_{0} \cup S_{+}\rangle$. By induction on $n$, we obtain $r_{k} \in \langle S_{0} \cup S_{+} \rangle$. It follows that
\begin{align*}
r\in S_{+} \cdot \langle S_{0} \cup S_{+} \rangle \subseteq  \langle S_{0} \cup S_{+} \rangle,
\end{align*}
and  $R= \langle S_{0} \cup S_{+} \rangle.$ 

Let $R$ be right noetherian. Then we can choose a finite subset $S_{+}$ of homogeneous elements of $R_{+}$ such that $R_{+}=S_{+}R$. Thus Part (b) follows  from Part (a).
\end{proof}

By Lemma \ref{lem:generators-noetherian-graded-ring}, we have the following result for Noetherian graded rings.
\begin{theorem}\label{thm:noetherian-graded-ring-affine}
Let $R=\oplus_{n\ge 0}R_{n}$ be a graded ring. If $R$ is right or left Noetherian and $R_{0}$ is affine, then $R$ is affine.
\end{theorem}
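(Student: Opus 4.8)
The plan is to combine Lemma~\ref{lem:generators-noetherian-graded-ring}(b) with the hypothesis that $R_0$ is affine. First I would invoke the affineness of $R_0$: by definition there is a finite generating set $S_0$ of $R_0$ as a ring. Next, since $R$ is right or left Noetherian, Lemma~\ref{lem:generators-noetherian-graded-ring}(b) provides a finite subset $S_+\subseteq R_+$ such that $R=\langle S_0\cup S_+\rangle$. The union $S_0\cup S_+$ is finite, being a union of two finite sets, so $R$ is generated as a ring by a finite set, i.e.\ $R$ is affine.

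The one point deserving a word of care is that Lemma~\ref{lem:generators-noetherian-graded-ring}(b) is stated for an \emph{arbitrary} generating set $S_0$ of $R_0$: it asserts that for every such $S_0$ there exists a finite $S_+\subseteq R_+$ with $R=\langle S_0\cup S_+\rangle$. Reading the proof of that lemma, though, one sees that $S_0$ enters only through the claim ``$r\in R_0\implies r\in\langle S_0\cup S_+\rangle$,'' which holds because $S_0$ generates $R_0$; the choice of $S_+$ (a finite homogeneous set with $R_+=S_+R$, available by Noetherianity) is independent of $S_0$. Hence we are free to take $S_0$ to be a \emph{finite} generating set, which exists precisely because $R_0$ is affine.

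I do not expect a genuine obstacle here: the theorem is essentially a two-line corollary of the preceding lemma, the only substantive input being the translation of ``$R_0$ affine'' into ``$R_0$ has a finite ring-generating set,'' which is immediate from the definition of affine recalled at the start of the section. For definiteness I would write the proof as follows.

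\begin{proof}
Since $R_0$ is affine, it has a finite generating set $S_0$ as a ring. As $R$ is right or left Noetherian, Lemma~\ref{lem:generators-noetherian-graded-ring}(b) yields a finite subset $S_+\subseteq R_+$ with $R=\langle S_0\cup S_+\rangle$. Then $S_0\cup S_+$ is finite, so $R$ is affine.
\end{proof}
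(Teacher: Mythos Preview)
Your proposal is correct and matches the paper's approach exactly: the paper presents this theorem as an immediate consequence of Lemma~\ref{lem:generators-noetherian-graded-ring} without even writing out a separate proof. Your careful remark that the choice of $S_+$ in the lemma is independent of $S_0$ is a nice clarification, but not strictly necessary since the lemma already quantifies over arbitrary generating sets $S_0$.
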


Next we apply Theorem \ref{thm:noetherian-graded-ring-affine} and Lemma \ref{lem:Molnar} to Noetherian graded algebras whose degree zero part is a commutative or a cocommutative Hopf algebra:
\begin{theorem}\label{thm:noetherian-graded-algebra-affine}
Let $A= \oplus_{n\ge 0}A_{n}$ be a graded algebra over $\K$. Assume that $A$ is noetherian. 
\begin{itemize}
    \item [(a)] If $A_{0}$ is affine, then $A$ is affine.
  \item [(b)] If $A_{0}$ is a commutative or a cocommutative Hopf subalgebra, then $A$ is affine.
\end{itemize}
\begin{proof}
Part (a) follows from Theorem \ref{thm:noetherian-graded-ring-affine}. Note that $A_{0}$ is Noetherian since $A$ is Noetherian and $A_{0}$ is a quotient of $A$.  
By Lemma \ref{lem:Molnar} $A_{0}$ is affine. It follows from Part (a) that $A$ is affine.
\end{proof}
\end{theorem}

It is clear that the above result holds for Hopf algebras which are graded as algebras.
\begin{corollary}\label{cor:noetherian-graded-Hopfalgebra-affine}
Let $H$ be a $\K$-Hopf algebra which is an $\mathbb{N}$-graded algebra. Assume that $H$ is noetherian. Then the following hold:
\begin{itemize}
   \item [(a)] If $H_{0}$ is affine, then $H$ is affine.
  \item [(b)] If $H_{0}$ is a commutative or a cocommutative Hopf subalgebra, then $H$ is affine.
\end{itemize}
\end{corollary}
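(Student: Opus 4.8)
The plan is to deduce Corollary \ref{cor:noetherian-graded-Hopfalgebra-affine} directly from Theorem \ref{thm:noetherian-graded-algebra-affine}, since the hypotheses are essentially identical: a Hopf algebra $H$ that is $\mathbb{N}$-graded as an algebra is in particular a graded $\K$-algebra $H=\oplus_{n\ge 0}H_{n}$, and the assumption ``$H$ is noetherian'' supplies the noetherian hypothesis needed in the theorem. So the real content is already packaged in the earlier results, and the corollary is the specialization to the Hopf setting.

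For part (a), I would simply observe that if $H_{0}$ is affine, then Theorem \ref{thm:noetherian-graded-algebra-affine}(a) applies verbatim with $A=H$ and gives that $H$ is affine. For part (b), I would note that if $H_{0}$ is a commutative or cocommutative Hopf subalgebra of $H$, then again Theorem \ref{thm:noetherian-graded-algebra-affine}(b) applies with $A=H$: the proof of that theorem shows $H_{0}$ is noetherian (being a quotient of the noetherian ring $H$ — here one uses that the projection $H\twoheadrightarrow H_{0}$ killing $H_{+}=\oplus_{n\ge1}H_{n}$ is a ring surjection, since $H_{+}$ is a two-sided ideal), hence affine by Lemma \ref{lem:Molnar}, and then part (a) finishes the argument.

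The only point requiring a moment's care is that the notion of ``Hopf subalgebra'' in degree zero makes sense and that $H_{0}$ inherits the relevant structure — but this is automatic: $H_{0}$ is a subalgebra of $H$ by the grading, and the hypothesis in (b) is precisely that this subalgebra is closed under the comultiplication and antipode and is commutative or cocommutative, so Molnar's lemma is applicable to $H_{0}$ as stated. I do not anticipate any genuine obstacle; the corollary is a direct restatement of Theorem \ref{thm:noetherian-graded-algebra-affine} for Hopf algebras graded as algebras, and the proof is one line invoking that theorem.

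\begin{proof}
This is immediate from Theorem \ref{thm:noetherian-graded-algebra-affine}, applied to the graded algebra $A=H$.
\end{proof}
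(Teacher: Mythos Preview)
Your proposal is correct and matches the paper's approach exactly: the paper presents this corollary without a separate proof, simply remarking that Theorem \ref{thm:noetherian-graded-algebra-affine} obviously applies to Hopf algebras graded as algebras, which is precisely your one-line argument.
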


Denote by $\gr_{c} H$  the associated graded Hopf algebra corresponding to the coradical filtration of a Hopf algebra $H$. We have the following result for pointed Hopf algebras. 
\begin{corollary}\label{cor:noetherian-grH}
Let $H$ be a pointed Hopf algebra over $\K$. If $\gr_{c} H$ is noetherian, then $\corad(H)$, $\gr_{c} H$ and $H$ are affine. 
\end{corollary}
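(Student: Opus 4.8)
The plan is to establish the three affineness claims in the order $\gr_c H$, then $\corad(H)$, then $H$, using that for a pointed Hopf algebra the coradical filtration is an exhaustive algebra filtration (with $H_{-1}=0$) whose associated graded object is an $\mathbb{N}$-graded Hopf algebra.

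First I would record the structural facts. Since $H$ is pointed, $\corad(H)=\K G$ where $G=G(H)$ is the group of grouplikes, and $\K G$ is a Hopf subalgebra of $H$; consequently the coradical filtration $\K G=H_0\subseteq H_1\subseteq\cdots$ satisfies $H_iH_j\subseteq H_{i+j}$ and $\bigcup_n H_n=H$, so $\gr_c H=\bigoplus_{n\ge 0}H_n/H_{n-1}$ is a Hopf algebra that is $\mathbb{N}$-graded as an algebra, with degree-zero component $(\gr_c H)_0=H_0=\K G$. A group algebra is cocommutative, so $(\gr_c H)_0$ is a cocommutative Hopf subalgebra of $\gr_c H$. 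As $\gr_c H$ is noetherian by hypothesis, Corollary \ref{cor:noetherian-graded-Hopfalgebra-affine}(b) then applies directly and gives that $\gr_c H$ is affine.

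For $\corad(H)$, I would note that $(\gr_c H)_+=\bigoplus_{n\ge 1}H_n/H_{n-1}$ is a two-sided ideal with $\gr_c H/(\gr_c H)_+\cong(\gr_c H)_0=\corad(H)$, so $\corad(H)$, being a quotient of the affine ring $\gr_c H$, is affine. (Equivalently: $\K G$ is a quotient of the noetherian ring $\gr_c H$, hence noetherian, and is affine by Lemma \ref{lem:Molnar}(b) since it is cocommutative.)

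Finally, to descend from $\gr_c H$ to $H$ I would invoke the standard lifting argument for an exhaustive, nonnegatively indexed algebra filtration (the filtered analogue of Lemma \ref{lem:generators-noetherian-graded-ring}): choose finitely many homogeneous algebra generators $\overline{y}_1,\dots,\overline{y}_r$ of $\gr_c H$ with $\overline{y}_i\in H_{d_i}/H_{d_i-1}$, pick lifts $y_i\in H_{d_i}$, and prove by induction on $n$ that $H_n$ lies in the subalgebra of $H$ generated by $y_1,\dots,y_r$ — the base case $n=0$ asserts that the $y_i$ with $d_i=0$ generate $H_0$, which holds because their images generate $(\gr_c H)_0$, and for $a\in H_n$ with $n\ge 1$ one writes $\overline a\in H_n/H_{n-1}$ as a $\K$-linear combination of monomials $\overline{y}_{i_1}\cdots\overline{y}_{i_k}$ with $d_{i_1}+\cdots+d_{i_k}=n$ and subtracts the corresponding combination of $y_{i_1}\cdots y_{i_k}$ to reduce $a$ into $H_{n-1}$. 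Since $H=\bigcup_n H_n$, the $y_i$ generate $H$, so $H$ is affine. The only substantial input here is Corollary \ref{cor:noetherian-graded-Hopfalgebra-affine}(b) (hence Theorem \ref{thm:noetherian-graded-ring-affine} together with Molnar's lemma); the point that requires a little care is that pointedness is exactly what guarantees $\corad(H)$ is a Hopf subalgebra, which is what makes the coradical filtration multiplicative and therefore makes both the grading on $\gr_c H$ and the lifting argument legitimate.
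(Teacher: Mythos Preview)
Your proof is correct and follows essentially the same route as the paper: identify $(\gr_c H)_0=\corad(H)$ as a cocommutative Hopf algebra, use Molnar to get it affine (you package this inside Corollary~\ref{cor:noetherian-graded-Hopfalgebra-affine}(b), the paper invokes Lemma~\ref{lem:Molnar} and Theorem~\ref{thm:noetherian-graded-ring-affine} separately), and then pass from $\gr_c H$ to $H$. The only real difference is expository: the paper compresses the descent from $\gr_c H$ to $H$ into the phrase ``it follows from Theorem~\ref{thm:noetherian-graded-ring-affine},'' whereas you spell out the standard filtered-lifting induction explicitly, which is arguably the cleaner way to justify that step.
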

\begin{proof}
Note that $(\gr_{c} H)_0=\corad(H)$ is a cocommutative Noetherian Hopf algebra.  By Lemma \ref{lem:Molnar} it is affine. It follows from Theorem \ref{thm:noetherian-graded-ring-affine} that both $\gr_{c}H$  and $H$ are affine.
\end{proof}

\section{Noetherian graded bialgebras and braided Hopf algebras}

Now we study the affineness of the braided Hopf algebra of a Noetherian graded Hopf algebra. First we recall the following result  about  Radford biproducts of graded Hopf algebras:

\begin{lemma}{\cite{AS1998}}\label{lem:graded-Hopf-algebras}
Let $A=\oplus_{n\ge 0} A_{n}$ be a graded $\K$-Hopf algebra and ${^{A_{0}}_{A_{0}}\mathcal{YD}}$ the category of left Yetter-Drinfel'd modules. Then $A \cong B \sharp A_{0}$ as graded Hopf algebras, where  $B=A^{co~\pi}$ is an $\mathbb{N}$-graded braided Hopf algebra in ${^{A_{0}}_{A_{0}}\mathcal{YD}}$ such that $B=\oplus_{n\ge 0}B_{n}$, $B_{n}:=B\cap A_{n}$, $B_{0}=\K1$ and $B_{+}:= \oplus_{n\ge 1}B_{n} \subseteq A_{+}$.
\end{lemma}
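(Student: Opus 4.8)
The plan is to observe that the $\mathbb{N}$-grading turns $A$ into a \emph{Hopf algebra with a projection} onto $A_{0}$, to invoke Radford's biproduct theorem, and then to check that the resulting data inherit the grading. First I would record that $A_{0}$ is a Hopf subalgebra of $A$: it is a subalgebra, $\Delta(A_{0})\subseteq A_{0}\otimes A_{0}$ comes from $\Delta(A_{n})\subseteq\bigoplus_{i+j=n}A_{i}\otimes A_{j}$ with $n=0$, and $S(A_{0})\subseteq A_{0}$ follows from the standard fact that the antipode of a graded Hopf algebra is homogeneous of degree $0$. The projection $\pi\colon A\to A_{0}$ onto the degree-zero component is then a morphism of Hopf algebras — multiplicativity and comultiplicativity are checked on homogeneous elements using $A_{m}A_{n}\subseteq A_{m+n}$ and the graded coproduct, compatibility with $\varepsilon$ is immediate from $\varepsilon|_{A_{n}}=0$ for $n\ge1$, and compatibility with $S$ uses again that $S$ is graded — and the inclusion $\iota\colon A_{0}\hookrightarrow A$ is a Hopf algebra map with $\pi\iota=\id_{A_{0}}$. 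This is exactly the hypothesis of Radford's biproduct theorem \cite{R2012} in its Yetter--Drinfel'd formulation, which yields $A\cong B\sharp A_{0}$ as Hopf algebras, where $B=A^{co\,\pi}=\{a\in A:(\id\otimes\pi)\Delta(a)=a\otimes1\}$ is a braided Hopf algebra in ${}^{A_{0}}_{A_{0}}\mathcal{YD}$ — with $A_{0}$ acting by the adjoint action $h\cdot a=\sum h_{(1)}aS(h_{(2)})$ and coacting through the map induced by $\pi$ — and where the canonical isomorphism is the multiplication map $B\otimes A_{0}\to A$, $a\otimes h\mapsto ah$.

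Next I would transport the grading onto $B$ and verify the three assertions about it. Given $b=\sum_{n}b_{n}\in B$ with $b_{n}\in A_{n}$, note that applying $\id\otimes\pi$ to $\Delta(b_{n})\in\bigoplus_{i+j=n}A_{i}\otimes A_{j}$ kills every summand with $j\ge1$ and leaves only the component in $A_{n}\otimes A_{0}$; hence the identity $(\id\otimes\pi)\Delta(b)=b\otimes1$ decomposes degree by degree and forces $(\id\otimes\pi)\Delta(b_{n})=b_{n}\otimes1$ for each $n$, so every $b_{n}$ lies in $B$ and $B=\bigoplus_{n\ge0}B_{n}$ with $B_{n}=B\cap A_{n}$. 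If $b\in B_{0}=B\cap A_{0}$ then $\Delta(b)=(\id\otimes\pi)\Delta(b)=b\otimes1$ since $\pi$ is the identity on $A_{0}$; applying $\varepsilon$ in the first tensor slot gives $b=\varepsilon(b)1$, whence $B_{0}=\K1$, and $B_{+}=\bigoplus_{n\ge1}B_{n}\subseteq\bigoplus_{n\ge1}A_{n}=A_{+}$ is immediate. Finally, the canonical projection $\Pi\colon A\to B$, $\Pi(a)=\sum a_{(1)}S(\pi(a_{(2)}))$, and all the structure maps of the biproduct, are assembled from $\Delta$, $\varepsilon$, $S$, $\pi$, $\iota$, each homogeneous of degree $0$; therefore the Yetter--Drinfel'd action and coaction preserve the grading of $B$, the braided coproduct $\Delta_{B}=(\id\otimes\Pi)\Delta|_{B}$ maps $B_{n}$ into $\bigoplus_{i+j=n}B_{i}\otimes B_{j}$, and, placing $A_{0}$ in degree $0$ so that $(B\sharp A_{0})_{n}=B_{n}\otimes A_{0}$, the canonical isomorphism carries degree $n$ to $B_{n}A_{0}\subseteq A_{n}$; being bijective and degree-preserving, it is an isomorphism of $\mathbb{N}$-graded Hopf algebras.

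The only non-formal input is Radford's biproduct theorem itself, which I would cite rather than reprove. I expect the main obstacle to be purely a matter of bookkeeping: first, nailing down the degree-by-degree splitting that simultaneously gives $B=\bigoplus_{n}B_{n}$ and $B_{0}=\K1$; and, more importantly, keeping all the left/right and action/coaction conventions mutually consistent, so that $B=A^{co\,\pi}$ is genuinely the right coinvariant subalgebra, the formula for $\Pi$ is the correct splitting, and the grading $(B\sharp A_{0})_{n}=B_{n}\otimes A_{0}$ is exactly the one matched by the isomorphism $B\sharp A_{0}\xrightarrow{\sim}A$.
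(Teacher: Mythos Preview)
Your proof is correct and follows the standard route: recognize the graded Hopf algebra as a Hopf algebra with projection $\pi\colon A\to A_{0}$, invoke Radford's theorem, and then verify that all the biproduct data are compatible with the grading. However, the paper does not prove this lemma at all; it is stated purely as a citation of \cite{AS1998} (Andruskiewitsch--Schneider) and used as a black box in the proof of Theorem~\ref{thm:braided-part-affine}. So there is no approach to compare against --- you have supplied a proof where the paper simply defers to the literature.
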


Next we show that the braided Hopf algebra of a Noetherian graded  Hopf algebra is affine.
\begin{theorem}\label{thm:braided-part-affine}
Let $A=\oplus_{n\ge 0} A_{n}$ be a graded Hopf algebra over $\K$. The following hold:
\begin{itemize}
\item  [(a)] Let $G$ be a subset of homogeneous elements of $B_{+}$. Then $B_{+}=GB$ if and only if $A_{+}=GA$, that is, $G$ generates $B_{+}$ as a right ideal of $B$ if and only if $G$ generates $A_{+}$ as a right ideal of $A$. In this case, $B_{+}=GB=\sum_{n\ge 1}\K G^{n}$.
\item  [(b)] If $A$ is right (resp. left) noetherian, then $A_{0}$ is right (resp. left) Noetherian and $B$ is affine.
\end{itemize}
\end{theorem}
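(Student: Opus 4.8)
The plan is to exploit the canonical $\K$-linear projection $\Pi\colon A\to B$ supplied by the Radford biproduct decomposition of Lemma~\ref{lem:graded-Hopf-algebras}. Write $\pi\colon A\to A_{0}$ for the projection onto the degree-zero part — a Hopf algebra surjection splitting $A_{0}\hookrightarrow A$ — and set $\Pi(a)=\sum a_{(1)}S\pi(a_{(2)})$. First I would record the properties of $\Pi$ that are needed: it restricts to the identity on $B=A^{co\,\pi}$, and it is left $B$-linear, i.e.\ $\Pi(ca)=c\,\Pi(a)$ for $c\in B$, $a\in A$ (so in particular $\Pi(bh)=\varepsilon(h)b$ for $b\in B$, $h\in A_{0}$). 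Left $B$-linearity is the one genuine computation; it drops out of the defining relation $\sum c_{(1)}\otimes S\pi(c_{(2)})=c\otimes 1$ of $B$ together with $\pi$ being an algebra map and $S$ an anti-algebra map. I would also use the two structural facts coming from the identification $A\cong B\otimes A_{0}$, $b\otimes h\mapsto bh$, of graded objects: $A=BA_{0}$ (so $BA=A$ because $1\in B$), and $A_{n}=B_{n}A_{0}$ for every $n$; consequently
\[
A_{+}=B_{+}A_{0}=B_{+}A .
\]
Recall too that $B_{+}\subseteq A_{+}$ by Lemma~\ref{lem:graded-Hopf-algebras} and that $B_{+}$ is a two-sided ideal of $B$ with $B/B_{+}\cong B_{0}=\K 1$.

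For part (a), if $B_{+}=GB$ then $A_{+}=B_{+}A=(GB)A=G(BA)=GA$. Conversely, assume $A_{+}=GA$. Then $GB\subseteq B_{+}B\subseteq B_{+}$ since $G\subseteq B_{+}$, and for any $b\in B_{+}\subseteq A_{+}=GA$, writing $b=\sum_{i}g_{i}a_{i}$ with $g_{i}\in G$, $a_{i}\in A$ and applying $\Pi$ gives
\[
b=\Pi(b)=\sum_{i}\Pi(g_{i}a_{i})=\sum_{i}g_{i}\,\Pi(a_{i})\in GB
\]
by $\Pi|_{B}=\id$ and left $B$-linearity; hence $B_{+}=GB$. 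The identity $GB=\sum_{n\ge 1}\K G^{n}$ then follows by an induction on degree (the argument of Lemma~\ref{lem:generators-noetherian-graded-ring}(a)), using that $G$ consists of homogeneous elements of positive degree and $B_{0}=\K 1$; in particular $B=\langle G\rangle$ as a $\K$-algebra.

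For part (b), $A_{0}$ is the quotient ring $A/A_{+}$, hence right (resp.\ left) Noetherian as soon as $A$ is. Assume $A$ is right Noetherian; then the right ideal $A_{+}$ is finitely generated, say $A_{+}=\sum_{i=1}^{m}x_{i}A$. Since $A_{+}=B_{+}A$, each $x_{i}=\sum_{j}b_{ij}r_{ij}$ with $b_{ij}\in B_{+}$ and $r_{ij}\in A$; replacing the $b_{ij}$ by their homogeneous components (which remain in $B_{+}$) produces a finite set $G$ of homogeneous elements of $B_{+}$ with $\sum_{i}x_{i}A\subseteq\sum_{i,j}b_{ij}A\subseteq B_{+}A=A_{+}$, so $A_{+}=GA$; part (a) then gives that $B=\langle G\rangle$ is affine. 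The left Noetherian case is entirely parallel, using $A_{+}=AB_{+}$ and the right $B$-linear projection of the biproduct in place of $\Pi$. The point I expect to require the most care is the very first one — the left $B$-linearity of $\Pi$, i.e.\ that $\Pi$ exhibits $B$ as a left-$B$-module direct summand of $A$; granted that Sweedler-calculus identity, the remainder is bookkeeping with the grading, the one substantive idea being that $A_{+}$ is generated as a one-sided ideal of $A$ by the subspace $B_{+}$, so a Noetherian generating set of $A_{+}$ may always be chosen inside $B_{+}$ and then transported to $B$ via $\Pi$.
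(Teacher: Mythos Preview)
Your proof is correct and rests on the same structural fact as the paper's---the biproduct decomposition $A\cong B\otimes A_{0}$---but you organise the converse in part~(a) differently. The paper first runs a degree induction to show $A_{+}\subseteq\bigl(\sum_{n\ge1}\K G^{n}\bigr)\cdot A_{0}$, and then reads off $b\in\sum_{n\ge1}\K G^{n}$ for $b\in B_{+}$ directly from the tensor isomorphism, writing $b\otimes 1_{A_{0}}=\sum f_{k}\otimes e_{k}$. You instead invoke the retraction $\Pi$ and its left $B$-linearity to pass from $b\in GA$ to $b\in GB$ in one stroke, postponing the degree induction to the identity $GB=\sum_{n\ge1}\K G^{n}$. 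Your route isolates the single nontrivial Sweedler computation cleanly and makes the transfer of generators from $A$ to $B$ transparent; the paper's route avoids introducing $\Pi$ explicitly but is using the same splitting $\id_{B}\otimes\varepsilon$ in disguise. In part~(b) your extraction of a finite homogeneous $G\subseteq B_{+}$ from a finite right-ideal generating set of $A_{+}$ (via $A_{+}=B_{+}A$) is a minor variant of the paper's argument, which instead starts from an arbitrary homogeneous generating set $G'\subseteq B_{+}$, applies part~(a) to get $A_{+}=G'A$, and then cuts $G'$ down to a finite $G$ by Noetherianity of $A$.
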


\begin{proof}
Suppose that $B_{+}=GB$, that is, $G$ generates $B_{+}$ as a right ideal of $B$. 
By Lemma \ref{lem:graded-Hopf-algebras} and \cite[Theorem 11.7.1]{R2012}, we see that every element $x\in A_{+}$ can be written as 
\begin{align*}
x=\sum_{i} b_{i}a_{i}, \qquad b_{i}\in B_{+},~a_{i}\in A_{0}.
\end{align*} 
Then $x\in GA$, which means that $A_{+}\subseteq GA$. On the other hand,   by Lemma \ref{lem:graded-Hopf-algebras} we have $G\subseteq B_{+} \subseteq A_{+}$. Since $A_{+}$ is a right ideal of $A$, we have  $GA\subseteq A_{+}$. It follows that $A_{+}=GA$. 

Conversely, suppose that $A_{+}=GA$. 
Note that every homogeneous element $y\in A_{+}$ can be written as
\begin{align*}
y= \sum_{j} g_{j}r_{j}, \qquad 0\neq g_{j}\in \K G, ~r_{j}\in A,
\end{align*}
where $r_{j}$ are homogeneous elements of degree $\ge 0$. Since $0\neq g_{j}\in \K G\subseteq A_{+}$, we have $\deg(g_{j})\ge 1$ and $\deg(r_{j})=\deg(y)-\deg(g_{j})<\deg(y)$.  By induction, we  obtain:
\begin{align*}
y = \sum_{k}d_{k}c_{k}, \qquad d_{k}\in   \sum_{n\ge 1}\K G^{n}, ~c_{k}\in A_{0}.
\end{align*}
Recall that $B_{+}\subseteq A_{+}$.
For every homogeneous element $b\in B_{+}$, we can write
\begin{align*}
b= \sum_{k}f_{k}e_{k}, \qquad f_{k}\in  \sum_{n\ge 1}\K G^{n}, ~e_{k}\in A_{0}.
\end{align*}
Now applying the isomorphism  $B \sharp A_{0} \cong A$, we obtain 
$$b\otimes 1_{A_{0}}=\sum_{k}f_{k} \otimes e_{k}.$$ 
It follows that $b \in  \sum_{n\ge 1}\K G^{n}$ and $B_{+}\subseteq  \sum_{n\ge 1}\K G^{n} \subseteq GB$.  Since $G \subseteq B_{+}$ and $B_{+}$ is a right ideal of $B$, we have $B_{+}=GB=\sum_{n\ge 1}\K G^{n}$.

It remains to show Part (b). We first choose a subset $G'\subseteq B_{+}$ such that it generates $B_{+}$ as a right ideal of $B$. 
By Part (a) we have  $A_{+}=G'A$.
Since $A$ is right noetherian, we can choose a finite subset $G\subseteq G'$ such that $A_{+}=GA$. It follows from Part (a)  that  $B_{+}=GB=\sum_{n\ge 1}\K G^{n}$.  So the finite set $G\cup \{1\}$ generates $B$ as an algebra. Therefore,  $B$ is affine.
\end{proof}

From Corollary \ref{cor:noetherian-grH} and Theorem \ref{thm:braided-part-affine} (b), we have the following result for pointed Hopf algebras.
\begin{corollary}
Let $H$ be a pointed Hopf algebra over $\K$. If $\gr_{c} H$ is noetherian, then  $\corad(H)$, $\gr_{c} H$, $H$ and $B$ are affine, where $\gr_{c} H \cong B \sharp \corad(H)$ as grade Hopf algebras.
\end{corollary}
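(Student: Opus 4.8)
The plan is to obtain the four affineness assertions by specializing the two results cited immediately before the statement to the graded Hopf algebra $A := \gr_c H$. First I would record the structural setup. Since $H$ is pointed, its coradical $\corad(H) = \K[G(H)]$ is a cocommutative Hopf algebra, and under the coradical filtration one has the identification $(\gr_c H)_0 = \corad(H)$. Applying Lemma \ref{lem:graded-Hopf-algebras} to $A = \gr_c H$ then produces a decomposition $\gr_c H \cong B \sharp \corad(H)$ of graded Hopf algebras, in which $B = (\gr_c H)^{co\,\pi}$ is the associated $\mathbb{N}$-graded braided Hopf algebra with $B_0 = \K 1$ and $B_+ \subseteq (\gr_c H)_+$. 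This is exactly the object $B$ named in the displayed isomorphism of the statement, so the notation is consistent and no separate identification step is needed later.

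Next I would dispatch the affineness of the first three objects by a direct appeal to Corollary \ref{cor:noetherian-grH}, whose hypotheses hold verbatim: $H$ is pointed and $\gr_c H$ is Noetherian. For completeness I would recall its mechanism. The degree zero part $\corad(H)$ is a quotient of $\gr_c H$ (namely $\gr_c H / (\gr_c H)_+$), hence a cocommutative Noetherian Hopf algebra, so Lemma \ref{lem:Molnar}(b) makes it affine; with $R = \gr_c H$ Noetherian and $R_0 = \corad(H)$ affine, Theorem \ref{thm:noetherian-graded-ring-affine} gives that $\gr_c H$ is affine; and lifting a finite homogeneous generating set of $\gr_c H$ through the exhaustive coradical filtration of $H$ yields that $H$ is affine as well. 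Thus $\corad(H)$, $\gr_c H$ and $H$ are affine.

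It remains to treat $B$, and here I would apply Theorem \ref{thm:braided-part-affine}(b) with $A = \gr_c H$. Since $\gr_c H$ is (right, say) Noetherian by hypothesis, that theorem returns both that $(\gr_c H)_0 = \corad(H)$ is Noetherian and that $B$ is affine, completing the argument. The only point deserving care — which I regard as the genuine content rather than an obstacle — is that the braided Hopf algebra $B$ furnished by Theorem \ref{thm:braided-part-affine} must coincide with the one occurring in $\gr_c H \cong B \sharp \corad(H)$; this is immediate because both are produced by the single application of Lemma \ref{lem:graded-Hopf-algebras} to $A = \gr_c H$ set up in the first paragraph. Everything else is a direct citation of the two preceding results, so there is no substantively new difficulty.
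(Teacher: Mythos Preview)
Your proposal is correct and follows exactly the approach the paper intends: the corollary is stated without a separate proof and is presented as an immediate consequence of Corollary~\ref{cor:noetherian-grH} (giving affineness of $\corad(H)$, $\gr_c H$, and $H$) together with Theorem~\ref{thm:braided-part-affine}(b) applied to $A=\gr_c H$ (giving affineness of $B$). Your added remarks about the structural setup via Lemma~\ref{lem:graded-Hopf-algebras} and the identification of $B$ are accurate and simply make explicit what the paper leaves tacit.
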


\bibliographystyle{plain}

\end{document}